\newcommand{\Tr}{\mathrm{Tr}}
\newcommand{\Sp}{\mathrm{Sp}}
\newcommand{\T}[1]{{}^t{{#1}}}
\newcommand{\A}{{\mathbb A}}
\renewcommand{\a}{{\mathfrak a}}
\newcommand{\Q}{{\mathbb Q}}
\newcommand{\Z}{{\mathbb Z}}
\newcommand{\GL}{{\rm GL}}
\newcommand{\SL}{{\rm SL}}
\newcommand{\GSp}{{\rm GSp}}
\newcommand{\PGSp}{{\rm PGSp}}
\newcommand{\disc}{{\rm disc}}
\DeclareMathOperator{\Cl}{Cl}
\newcommand{\mat}[4]
{{\setlength{\arraycolsep}{0.5mm}\left
(\begin{array}{cc}#1&#2\\#3&#4\end{array}\right)}}
\newcommand{\forget}[1]{}
\newtheorem{lemma}{Lemma}[section]
\newtheorem{theorem}{Theorem}
\newtheorem{conjecture}[lemma]{Conjecture}
\theoremstyle{remark}
\newtheorem{remark}[lemma]{Remark}
\begin{document}

\bibliographystyle{plain}

\author{Abhishek Saha}
\address{ Departments of Mathematics \\
  University of Bristol \\
  Bristol, United Kingdom} \email{abhishek.saha@bristol.ac.uk}

\title{A relation between multiplicity one and B\"ocherer's conjecture}

\begin{abstract} We show that a weak form of the generalized B\"ocherer's conjecture implies multiplicity one for Siegel cusp forms of degree 2.

\end{abstract}

 \maketitle

\section{Multiplicity one}

 Let $S_k(\Gamma)$   denote the space of classical holomorphic cusp forms of weight $k$ for $\Gamma = \SL_2(\Z)$. Suppose that $f_1$ and $f_2$ are two Hecke eigenforms in $S_k(\Gamma)$ such that the  Hecke eigenvalues $\lambda_p(f_1)$ and $\lambda_p(f_2)$ are equal for all primes $p$. Then, it is well known that there exists a constant $c$ such that $f_1 = c f_2$. Indeed, the various Hecke eigenvalues $\lambda_p(f_i)$ determine all the Fourier coefficients of $f_i$, hence the form $f_i$ itself, up to a common multiple.

 The above fact is a special case of the ``multiplicity one theorem" for $\GL_2$ due to Jacquet--Langlands. Multiplicity one was extended to the case of cuspidal representations on the group $\GL_n$ independently by Shalika~\cite{shalmultone} and Piatetski-Shapiro~\cite{psmultone}.  However, much less is known for cuspidal representations on other reductive groups.

 In this note, we consider the space\footnote{For definitions and background on Siegel cusp forms, see~\cite{klingen}.} $S_k(\Gamma_2)$ of holomorphic Siegel cusp forms of weight $k$ for $\Gamma_2 = \Sp_4(\Z)$. These forms give rise to cuspidal representations on the group $\GSp_4$, for which multiplicity one remains unknown. In particular, one has the following conjecture.

 \begin{conjecture}[Multiplicity one for Siegel cusp forms of degree 2 and full level]\label{con:mult} Let $f_1$ and $f_2$ be two Hecke eigenforms in $S_k(\Gamma_2)$ such that for all primes $p$, we have an equality of Hecke eigenvalues $\lambda_p(f_1) = \lambda_p(f_2)$ and $\lambda_{p^2}(f_1) = \lambda_{p^2}(f_2)$. Then, there exists a constant $c$ such that $f_1 = c f_2$.

 \end{conjecture}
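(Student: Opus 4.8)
The plan is to show that the common Hecke eigenspace containing $f_1$ and $f_2$ is one-dimensional. Let $V \subseteq S_k(\Gamma_2)$ be the simultaneous eigenspace for all the Hecke operators with the eigenvalues $\lambda_p := \lambda_p(f_1) = \lambda_p(f_2)$ and $\lambda_{p^2} := \lambda_{p^2}(f_1) = \lambda_{p^2}(f_2)$. Since the two Hecke operators whose eigenvalues are $\lambda_p$ and $\lambda_{p^2}$ generate the local Hecke algebra at each $p$, this eigenvalue data pins down the action of the entire commutative, Petersson-self-adjoint Hecke algebra, so $V$ is a well-defined space on which every nonzero element is a Hecke eigenform attached to one and the same cuspidal automorphic representation $\pi$ of $\GSp_4$. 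The goal then becomes $\dim V = 1$, for then $f_2 \in V = \C f_1$.

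For an imaginary quadratic field $\Q(\sqrt{-D})$ and a character $\Lambda$ of its class group $\Cl(\Q(\sqrt{-D}))$, I would use Gauss's bijection between ideal classes and $\SL_2(\Z)$-classes of primitive positive-definite binary quadratic forms of discriminant $-D$ to define the twisted B\"ocherer period $R_{D,\Lambda}(f) = \sum_{\{T\}} \Lambda([T])\, a(f,T)/\#\mathrm{Aut}(T)$, a linear functional on $V$. The one fact I would extract from the weak generalized B\"ocherer conjecture is purely spectral: it expresses $|R_{D,\Lambda}(f)|^2/\langle f, f\rangle$, with $\langle\cdot,\cdot\rangle$ the Petersson product, in terms of the central value $L(1/2, \pi_f \times \AI(\Lambda))$ and the local data of $\pi_f$ alone. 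In particular this ratio depends only on $\pi_f$ and on $(D,\Lambda)$, so it takes one and the same value $\mu_{D,\Lambda} \ge 0$ for every nonzero $f \in V$; that is, $|R_{D,\Lambda}(f)|^2 = \mu_{D,\Lambda}\,\langle f, f\rangle$ for all $f \in V$.

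The heart of the argument is then a rank-one observation. Since $f_1 \ne 0$, the theorem that a Siegel cusp form of degree $2$ is determined by its fundamental Fourier coefficients furnishes a fundamental discriminant $-D$ and a form $T$ with $a(f_1, T) \ne 0$; finite Fourier inversion over $\Cl(\Q(\sqrt{-D}))$ then produces a character $\Lambda$ with $R_{D,\Lambda}(f_1) \ne 0$, whence $\mu_{D,\Lambda} > 0$. Representing the functional by Riesz as $R_{D,\Lambda}(f) = \langle f, w\rangle$ for some $w \in V$, the identity $|\langle f, w\rangle|^2 = \mu_{D,\Lambda}\langle f,f\rangle$ forces $w^{\perp} \cap V = 0$, since any nonzero $f \perp w$ would make the left side vanish while the right side is strictly positive. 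Hence $\dim V = 1$ and $f_2 = c f_1$.

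I expect the main obstacle to be conceptual rather than computational: isolating exactly which weak consequence of B\"ocherer's conjecture is both available and sufficient, namely the representation-theoretic statement that $|R_{D,\Lambda}(f)|^2/\langle f,f\rangle$ is an invariant of $\pi_f$, and verifying that the constant implicit there genuinely depends only on $\pi$ through its archimedean and local components and not on the individual vector $f \in V$. Note that the nonvanishing input needed to avoid a degenerate case is supplied not by a hard analytic nonvanishing theorem for $L(1/2, \pi \times \AI(\Lambda))$, but by the existence of a single nonzero fundamental Fourier coefficient of $f_1$; this is why enlarging the family of periods from quadratic twists to all class-group characters, hence to automorphic inductions $\AI(\Lambda)$, is essential, as it is precisely this enlarged family that detects every fundamental coefficient.
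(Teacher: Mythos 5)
Your overall strategy---use the fundamental-Fourier-coefficient nonvanishing theorem of \cite{sahafund} to find a fundamental discriminant $-D$ with $a(f_1,T)\neq 0$, pass by finite Fourier inversion over the class group to a character $\Lambda$ for which the period functional $R_{D,\Lambda}$ is not identically zero, and then use a B\"ocherer-type statement to show this linear functional cannot vanish on any nonzero vector of the common eigenspace $V$, forcing $\dim V=1$---is exactly the paper's. The one substantive divergence is the input you extract from the conjecture. The weak form as stated in Conjecture~\ref{weakgenboch} is only the qualitative equivalence $R(f,K,\Lambda)\neq 0 \iff L(1/2,\pi_f\times\theta_\Lambda)\neq 0$; it carries no Petersson-normalized identity, so the statement you actually invoke---that $|R_{D,\Lambda}(f)|^2/\langle f,f\rangle$ is an invariant of $\pi_f$---is a strictly stronger, refined (Ichino--Ikeda/Prasad-type) hypothesis that does not follow from the weak form. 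Your Riesz-representation rank-one step is therefore both an overreach and an unnecessary detour: since every nonzero $f\in V$ has the same Hecke eigenvalues and hence the same $L$-function $L(s,\pi_f\times\theta_\Lambda)$ (the operators at $p$ and $p^2$ generate the local Hecke algebra), the qualitative equivalence alone already shows that $R_{D,\Lambda}$ is nonzero on \emph{every} nonzero element of $V$ once it is nonzero on one of them; a linear functional on a space of dimension at least $2$ always has nontrivial kernel, so $\dim V=1$. The paper phrases this by exhibiting the would-be kernel element explicitly as $g_1=f_1-\frac{R(f_1,K,\Lambda)}{R(f_2,K,\Lambda)}f_2$ and deriving the contradiction $L(1/2,\pi_{g_1}\times\theta_\Lambda)=0$ and $\neq 0$. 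If you replace your quantitative input by the qualitative one, your argument collapses to the paper's; as written, it establishes the implication only from the stronger refined conjecture, which is a weaker theorem than Theorem~\ref{t:main1}.
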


  The above conjecture is deep. For instance, it does not appear to follow from the transfer to $\GL_4$ that was proved in~\cite{transfer} even though multiplicity one is known for $\GL_4$. We can get some idea about the difficulties involved by looking at the proof of multiplicity one for cusp forms on $\GL_n$. The proof involves combining the Whittaker expansion of cusp forms on $\GL_n$ with the uniqueness of Whittaker models. This approach does not work for holomorphic Siegel cusp forms because the corresponding Whittaker expansion does not exist (Hecke eigenforms in $S_k(\Gamma_2)$ are non-generic).

  It seems appropriate to point out here that in Conjecture~\ref{con:mult}, one may replace ``for all primes" by ``for almost all primes" without increasing the difficulty of the problem. Let us briefly explain this point. Hecke eigenforms in $S_k(\Gamma_2)$ that are not Saito-Kurokawa lifts (the Saito-Kurokawa lifts can be dealt with separately) lead to cuspidal representations of $\GSp_4$ whose local components at all finite places are tempered unramified principal series. If two eigenforms have the same eigenvalues at almost all primes, then they lead to representations that are nearly equivalent (i.e., their local components are equivalent at almost all primes). A fairly simple argument using the the global functional equation and the temperedness now shows that the two representations must have the same local $L$-function at all the remaining primes. Since unramified principal series representations are determined by their $L$-functions, it follows that the two representations are in fact equivalent. So the eigenforms we started off with must have the same eigenvalues at \emph{all} primes. This argument does not work if $\Gamma_2$ is replaced with a congruence  subgroup. Indeed, there are many examples of cuspidal representations on $\GSp_4$ that are nearly equivalent but not equivalent, e.g. those provided by the various Saito-Kurokawa lifts~\cite{ps-sk}, the various Yoshida lifts~\cite{sahaschmidt}, and the CAP representations of Borel type~\cite{howe-ps, filebighash}.

  \section{B\"ocherer's conjecture}
 Another deep and famous conjecture for Siegel cusp forms of degree 2 deals with the relation between the central $L$-values and the Fourier coefficients. Before stating this conjecture, we briefly recall the Fourier expansion for Siegel cusp forms. For any $f \in S_k(\Gamma_2)$, we can write

 $$f(Z)
=\sum_{S } a(f, S) e^{2 \pi i \Tr(SZ)},$$ where the Fourier coefficients $a(f,S)$ are indexed by matrices $S$ of the form
\begin{equation}\label{e:matrixform}
 S=\mat{a}{b/2}{b/2}{c},\qquad a,b,c\in\Z, \qquad a>0, \qquad \disc(S) := b^2 - 4ac < 0.
 \end{equation}
 Equivalently, the Fourier coefficients $a(f,S)$ are indexed by all positive, integral, binary quadratic forms. In fact, using the defining relation for Siegel cusp forms, we can see  that \begin{equation}\label{siegelinv}
a(f, \T{A}SA) = a(f,S)
\end{equation}
for all $A \in \SL_2(\Z)$, thus showing that $a(f, S)$ only depends on the $\SL_2(\Z)$-equivalence class of the matrix $S$, or equivalently, only on the proper equivalence class of the associated binary quadratic form.

Let $d > 0$ be an integer such that $-d$ is a fundamental discriminant.\footnote{Recall that an integer $n$ is a fundamental discriminant if \emph{either} $n$ is a squarefree integer congruent to 1 modulo 4 \emph{or} $n = 4m$ where $m$ is a squarefree integer congruent to 2 or 3 modulo 4.} Put $K = \Q(\sqrt{-d})$ and let $\Cl_K$ denote the ideal class group of $K$. It is a fact going back to Gauss that the $\SL_2(\Z)-$equivalence classes of binary quadratic forms of discriminant $-d$ are in natural bijective correspondence with the elements of $\Cl_K$. In view of the comments above, it follows that for any $f \in S_k(\Gamma_2)$ and any $c \in \Cl_K$ the notation $a(f, c)$ makes sense. We define

\begin{equation}\label{rfddef}
R(f, K) = \sum_{c \in \Cl_K}a(f, c).
\end{equation}

Now, suppose that $f$
 is an eigenform for the local Hecke algebras at all places. Then $f$ gives rise to an irreducible cuspidal automorphic representation $\pi_f$ of $\GSp_4(\A)$; see \cite{NPS}. The remarkable conjecture below was first made by B\"ocherer~\cite{boch-conj}.

 \begin{conjecture}[B\"ocherer's conjecture]\label{bochconj} Let $f \in S_k(\Gamma_2)$ be a non-zero Hecke eigenform and $\pi_f$ the associated automorphic representation. Then there exists a constant $c_f$ depending only on $f$ such that for any imaginary quadratic field $K=\Q(\sqrt{-d})$ with $-d$ a fundamental discriminant, we have

 $$ |R(f, K)|^2 = c_f \cdot d^{k-1} \cdot w(K)^{2} \cdot L(1/2, \pi_f \times \chi_{-d}) .$$

 Above, $\chi_{-d}$ is the quadratic Hecke character associated via class field theory to the field $\Q(\sqrt{- d})$, $w(K)$ denotes the number of distinct roots of unity inside $K$, and $L(s, \pi_f \times \chi_{-d})$ denotes the associated Langlands $L$-function.

 \end{conjecture}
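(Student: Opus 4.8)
The plan is to recognize $R(f,K)$ as a \emph{global Bessel period} of the adelic cusp form $\phi_f$ attached to $f$, and then to invoke the refined Gan--Gross--Prasad (Ichino--Ikeda--Liu) formula, which expresses the square of such a period as a product of the central $L$-value with explicit local integrals. First I would set up the classical-to-adelic dictionary. Fix an $S$ of discriminant $-d$, let $K = \Q(\sqrt{-d})$, and let $T \subset \GSp_4$ be the torus stabilizing the Bessel character $\theta_S$ of the Siegel unipotent radical $U$, so that $T(\Q) \cong K^\times$. For the trivial character $\Lambda$ of the idele class group of $K$, the global Bessel period
\[
B(\phi_f) = \int_{Z(\A)T(\Q)\backslash T(\A)} \int_{U(\Q)\backslash U(\A)} \phi_f(tu)\, \overline{\theta_S(u)}\, du\, dt
\]
collapses, on the classical side, into a finite sum indexed by the $T(\Q)\backslash T(\A_f)$-orbits on binary quadratic forms of discriminant $-d$; by Gauss these orbits are exactly $\Cl_K$, and each contributes $a(f,c)$ times a common archimedean Bessel integral. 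Thus $B(\phi_f) = (\text{arch. factor}) \cdot R(f,K)$, so that $|R(f,K)|^2$ equals $|B(\phi_f)|^2$ up to an explicit archimedean constant.

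Next I would apply the Ichino--Ikeda--Liu identity for Bessel periods on $\SO_5 \times \SO_2 \cong \PGSp_4 \times T$. Since the $L$-group of $\PGSp_4$ is $\Symp_4(\C)$ and its standard $4$-dimensional representation is the spin representation, the relevant tensor-product $L$-function attached to the trivial toral character is precisely the twisted spin $L$-function $L(s, \pi_f \times \chi_{-d})$. The formula reads, schematically,
\[
\frac{|B(\phi_f)|^2}{\langle \phi_f, \phi_f\rangle} \;=\; C \,\cdot\, \frac{L(1/2,\, \pi_f \times \chi_{-d})}{L(1,\pi_f,\mathrm{Ad})}\,\cdot\, \prod_{p} \alpha_p,
\]
where each $\alpha_p$ is a normalized local Bessel integral equal to $1$ for almost all $p$, and $C$ collects a power of $2$ coming from the global component group together with the normalized archimedean integral.

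Then I would evaluate the local and global factors and isolate the $d$-dependence. Because $f$ has full level, $\pi_f$ is unramified at every finite prime, so each $\alpha_p$ is the unramified Bessel integral, computable by the explicit (Casselman--Shalika type) formula as a ratio of local $L$-factors; the part of $\prod_p \alpha_p$ that depends on $d$ reduces to the finite Euler factors of $L(s,\pi_f \times \chi_{-d})$ already displayed, while the remaining, $d$-independent contribution is absorbed into a constant depending only on $f$. The measure-theoretic normalization of the toral integral is governed by the class number formula; for an imaginary quadratic field the regulator is trivial and the torsion (unit) contribution is exactly $w(K)$, so squaring produces the factor $w(K)^2$. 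Finally the archimedean factor $C$ together with the archimedean constant from the first step, evaluated for the holomorphic discrete series of scalar weight $k$, yields the power $d^{k-1}$. Collecting the genuinely $d$-independent quantities (the Petersson norm, the adjoint $L$-value, and the $d$-free part of the local product) into a single constant $c_f$ gives the asserted identity $|R(f,K)|^2 = c_f\, d^{k-1}\, w(K)^2\, L(1/2,\pi_f\times\chi_{-d})$.

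The main obstacle is the Ichino--Ikeda--Liu identity itself, which is a deep theorem rather than a formal manipulation: proving it requires either a comparison of relative trace formulas or an explicit integral representation (Furusawa's integral, refined so as to produce the \emph{square} of the period) together with a complete local analysis. Within this, the most delicate computation is the archimedean local integral for the holomorphic discrete series, from which the exponent $k-1$ of $d$ must be extracted exactly; equally important, and easy to underestimate, is verifying that after the unramified computation the leftover constant $c_f$ is genuinely independent of $K$ (that is, of $d$), since this is precisely what upgrades the statement from a proportionality with an unspecified $d$-dependent error to the clean identity claimed in Conjecture~\ref{bochconj}.
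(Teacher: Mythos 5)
There is a fundamental problem here that precedes any technical assessment: the statement you are trying to prove is Conjecture~\ref{bochconj}, and the paper does not prove it --- it explicitly records that this conjecture ``is deep and so far not proven for any Siegel cusp form that is not a lift of some sort.'' The paper's only theorem is that a weak form of the generalized conjecture \emph{implies} multiplicity one; the conjecture itself is an assumption, not a result. So there is no proof in the paper to compare yours against, and any purported proof must be judged on whether it is unconditional. Yours is not. The load-bearing step in your argument is the refined Gan--Gross--Prasad identity of Ichino--Ikeda--Liu type for Bessel periods on $\SO(5)\times\SO(2)$, which you invoke as ``the formula reads, schematically, \dots''. That identity is itself an open conjecture (in this setting it is essentially Liu's refinement of the conjectures of \cite{ggp}, and it is at least as deep as the statement you want). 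What you have written is therefore a conditional reduction of one conjecture to a stronger one --- a legitimate and indeed the expected strategy, and the one the paper itself gestures at when it relates $R(f,K,\Lambda)$ to Bessel periods via Sugano's formula --- but it is not a proof. Your own closing paragraph concedes this (``the main obstacle is the Ichino--Ikeda--Liu identity itself, which is a deep theorem rather than a formal manipulation''), and no argument for that identity, by relative trace formula, by Furusawa-type integral representations, or otherwise, is supplied.

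There is also a concrete error in the $L$-function identification. For the trivial character $\Lambda$ of $\Cl_K$, the $L$-function appearing in the Bessel-period formula is the degree~$8$ Rankin--Selberg convolution $L(s,\pi_f\times\theta_\Lambda)$, which (as the paper's own remark after Conjecture~\ref{weakgenboch} notes) factors as $L(s,\pi_f)\,L(s,\pi_f\times\chi_{-d})$; it is \emph{not} ``precisely the twisted spin $L$-function $L(s,\pi_f\times\chi_{-d})$'' as you claim. Your schematic identity should therefore produce $L(1/2,\pi_f)\,L(1/2,\pi_f\times\chi_{-d})$ on the right-hand side. This is repairable --- $L(1/2,\pi_f)$ is independent of $d$ and can be absorbed into $c_f$, which is exactly what makes Conjecture~\ref{bochconj} compatible with the refined statement --- but as written the step is wrong, and the repair should be made explicit, since tracking which factors are $d$-independent is, as you yourself emphasize, the whole content of getting the clean identity with a constant $c_f$ depending only on $f$.
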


The above conjecture is deep and so far not proven for any Siegel cusp form that is not a lift of some sort. Theoretical evidence for the truth of a refined version of the above conjecture was provided in 
work of the author with Kowalski and Tsimerman
(see~\cite[(5.4.5)]{kst2}).

B\"ocherer's conjecture has been further generalized by various people. We note in particular the papers by Furusawa--Shalika~\cite{furusawa-shalika-fund}, Furusawa--Martin \cite{furusawa-martin} and Prasad--Takloo-Bighash~\cite{prasadbighash}, as well  as a conjecture of Dipendra Prasad adapting Ichino-Ikeda's conjecture to this setting~\cite{prasadbighashpreprint}. In these generalizations, one takes a linear combination of the Fourier coefficients $a(f, c)$ with the values taken by an ideal class character of $\Cl_K$ (more generally, a Hecke character on $\A_K^\times$).  Partial progress towards such a conjecture has been made in recent work of Furusawa and Martin. Such a formulation is also closely related to the global Gross-Prasad conjecture
  for $(SO(5), SO(2))$.

We now describe a ``weak version" of a specific refinement of  B\"ocherer conjecture. Let $d$, $K$, $f$, $\pi_f$ be as before. For any character $\Lambda$ of the finite group $\Cl_K$, we make the definition

\begin{equation}\label{rfdef}
R(f, K, \Lambda) = \sum_{c \in \Cl_K}a(f, c) \Lambda^{-1}(c).
\end{equation}

Also, define the theta-series $$\theta_\Lambda(z) = \sum_{0 \ne \a \subset O_K}\Lambda(\a) e^{2 \pi i N(\a)z}.$$
Thus, $\theta_\Lambda$ is a holomorphic modular form of weight $1$ and nebentypus $\left(\frac{-d}{*} \right)$ on $\Gamma_0(d)$; it is a cusp form if and only if $\Lambda^2 \ne 1$. The form $\theta_\Lambda$ generates a dihedral automorphic representation of $\GL_2(\A)$ which coincides with the automorphic induction of $\Lambda$. We let $L(s, \pi_f \times \theta_\Lambda)$ denote the Langlands $L$-function attached to the Rankin--Selberg convolution of this dihedral representation with $\pi_f$. We now state the

 \begin{conjecture}[Generalized B\"ocherer's conjecture, weak form]\label{weakgenboch} Let $f \in S_k(\Gamma_2)$ be a non-zero Hecke eigenform and $\pi_f$ the associated automorphic representation. Let $K=\Q(\sqrt{-d})$ with $-d$ a fundamental discriminant, and $\Lambda$ be a character of the ideal class group of $K$. Then $ R(f, K, \Lambda) \neq 0$ if and only if $L(1/2, \pi_f \times \theta_\Lambda) \neq 0.$
\end{conjecture}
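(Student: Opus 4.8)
The plan is to realize $R(f,K,\Lambda)$ as a global \emph{Bessel period} of the cuspidal automorphic form $\phi_f$ generating $\pi_f$, and then to invoke the refined Gross--Prasad formula for the pair $(\SO(5),\SO(2))$ --- equivalently, for Bessel periods on $\GSp_4\times\GL_2$ --- which expresses the square of the period as an explicit product of local factors times the central value $L(1/2,\pi_f\times\theta_\Lambda)$. The first step is the classical-to-adelic dictionary. Fix a matrix $S$ as in \eqref{e:matrixform} of discriminant $-d$, let $U$ be the unipotent radical of the Siegel parabolic equipped with the additive character $\psi_S$ attached to $S$, and let $T_S$ be the anisotropic orthogonal similitude group of $S$; the idele-class quotient attached to $T_S$ has component group $\Cl_K$, so a character $\Lambda$ of $\Cl_K$ is the same as an unramified Hecke character of $T_S$. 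The inner integral $\int_{U(\Q)\bs U(\A)}\phi_f(tu)\psi_S^{-1}(u)\,du$ reproduces the Fourier coefficient attached to the class $t\cdot S$, and integrating $t$ against $\Lambda^{-1}$ over $T_S(\Q)\bs T_S(\A)$ converts the Gauss correspondence between $\Cl_K$ and classes of binary forms into the sum \eqref{rfdef}. Thus, up to an explicit nonzero constant, the global Bessel period $B_{S,\Lambda}(\phi_f)$ equals $R(f,K,\Lambda)$.

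The second step is to apply the refined Ichino--Ikeda-type identity (conjectured by D.\ Prasad in this setting and established in wide generality by Furusawa and Morimoto), which has the schematic shape
\[
|B_{S,\Lambda}(\phi_f)|^2 \;=\; \frac{1}{|S_{\pi_f}|}\,L(1/2,\pi_f\times\theta_\Lambda)\,\prod_v \alpha_v^{\ast},
\]
where $L(s,\pi_f\times\theta_\Lambda)$ is the degree-$8$ Rankin--Selberg $L$-function pairing the spinor representation of $\pi_f$ with $\GL_2$ (equivalently, by inductivity, the base-changed spinor $L$-function of $\pi_f$ twisted by $\Lambda$ over $K$), each $\alpha_v^{\ast}$ is a normalized local Bessel integral, and $|S_{\pi_f}|$ records the global component group. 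The normalizing ratio of partial $L$-functions is absorbed into the adjoint value $L(1,\pi_f,\mathrm{ad})$, which is nonzero. Granting this identity, the implication ``$L(1/2,\pi_f\times\theta_\Lambda)=0 \Rightarrow R(f,K,\Lambda)=0$'' is immediate and requires no local input. For the converse one needs that \emph{every} local factor $\alpha_v^{\ast}$ is strictly positive for the distinguished test vector $\phi_f$: the newform that is spherical at each finite place and a lowest weight vector at infinity.

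The main obstacle is therefore local non-vanishing. At the archimedean place $\pi_{f,\infty}$ is a holomorphic discrete series of $\GSp_4(\R)$, and one must show that the archimedean Bessel functional does not vanish on the lowest weight vector; this is the most delicate analytic step and is also where the expected power $d^{k-1}$ of the original conjecture is produced. At the finite primes $p\mid d$ the local representation $\theta_{\Lambda,p}$ is ramified while $\pi_{f,p}$ is unramified, so $\alpha_p^{\ast}$ must be evaluated explicitly on the spherical vector and shown to be nonzero; the unramified primes contribute only the standard closed-form unramified Bessel integral and cause no difficulty. Finally, the refined formula is cleanest for tempered $\pi_f$, so the CAP cases (Saito--Kurokawa and Yoshida type) should be isolated and handled directly from their explicit descriptions, as already signalled in the remarks of Section~1.
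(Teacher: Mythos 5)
The first thing to say is that the paper does not prove this statement at all: it is Conjecture~\ref{weakgenboch}, stated precisely as an open conjecture, and the paper's only theorem (Theorem~\ref{t:main1}) is that this conjecture \emph{implies} multiplicity one for $S_k(\Gamma_2)$. So there is no proof in the paper to compare yours against; anything you write here is necessarily an attempt at a result lying well beyond the paper itself.

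Judged as such an attempt, your proposal identifies the expected route correctly --- it matches the paper's own remarks that Sugano's formula relates $R(f,K,\Lambda)$ to a global Bessel period and that the conjecture is an instance of the Gross--Prasad conjecture for $(\SO(5),\SO(2))$ --- but it is a research program, not a proof. The two essential inputs are exactly the ones you defer. First, the refined Ichino--Ikeda-type identity for Bessel periods is, at the level of this paper, itself a conjecture (of Prasad, and in related form of Furusawa--Shalika); invoking the much later work of Furusawa--Morimoto amounts to citing a theorem whose proof \emph{is} the content you are being asked to supply, and even then you do not verify its hypotheses, notably temperedness of $\pi_f$, which fails precisely for the Saito--Kurokawa lifts. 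Second, and independently, both directions of the asserted equivalence require the strict positivity of every normalized local factor $\alpha_v^\ast$ on the specific vector at hand (spherical at all finite places, lowest weight at infinity): without this, $R(f,K,\Lambda)=0$ gives no information about $L(1/2,\pi_f\times\theta_\Lambda)$ even granting the period formula. You correctly single this out as ``the main obstacle'' and ``the most delicate analytic step,'' but you do not carry it out at the archimedean place, nor at the ramified primes $p\mid d$, and the CAP cases are only ``signalled'' to be handled separately. In short, the architecture is the right one --- and is, historically, how the conjecture was eventually settled --- but every step that would turn the outline into a theorem is missing, so this cannot be accepted as a proof of the statement.
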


\begin{remark} In the \emph{special case} $\Lambda = 1$, the quantity $R(F, K, \Lambda)$ is simply $R(F, K)$ in our earlier notation and the $L$-function $L(s, \pi_f \times \theta_\Lambda)$ factors as a product $L(s, \pi_f)L(s, \pi_f \times \chi_{-d})$. Thus, Conjecture~\ref{weakgenboch} is compatible with Conjecture~\ref{bochconj}.
\end{remark}

Conjecture~\ref{weakgenboch} is in the spirit of the global Gross-Prasad conjectures since it asserts that the non-vanishing of a (Bessel) period $R(F, K, \Lambda)$ is equivalent to the non-vanishing of the central value of a related $L$-function.  Indeed, Sugano's formula~\cite{sug} can be used to show that $R(F, K, \Lambda)$ is related to a global Bessel period, and as such in the case of trivial central character this is nothing but the global Gross-Prasad (note $\PGSp(4) \simeq SO(3, 2)$). For a somewhat stronger formulation, see~\cite[Conj. 1.11]{furusawa-shalika-fund}. In general, the new conjectures of Gan, Gross, and Prasad~\cite{ggp} are relevant.

\section{The main result}

\begin{theorem}\label{t:main1} Conjecture~\ref{weakgenboch} implies Conjecture~\ref{con:mult}.

\end{theorem}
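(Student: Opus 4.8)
The plan is to translate Conjecture~\ref{con:mult} into a statement about a linear functional on a two-dimensional space of eigenforms, and to use Conjecture~\ref{weakgenboch} to show that this functional cannot have a non-trivial kernel. Let $f_1,f_2$ be as in Conjecture~\ref{con:mult}, and set $W=\C f_1+\C f_2$. Since $\Gamma_2$ has full level, every finite place is unramified, so the eigenvalues $\lambda_p$ and $\lambda_{p^2}$ pin down the Satake parameters --- and hence the local representations --- at every prime $p$; as $f_1$ and $f_2$ have the same weight $k$, their archimedean components agree as well, giving $\pi_{f_1}\cong\pi_{f_2}=:\pi$. Because the Hecke operators act linearly and $f_1,f_2$ carry identical eigenvalues, \emph{every} non-zero $g\in W$ is again a Hecke eigenform with $\pi_g\cong\pi$. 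Proving Conjecture~\ref{con:mult} therefore amounts to showing $\dim W\le 1$.

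The mechanism is the following observation: for a fixed imaginary quadratic field $K$ and a fixed character $\Lambda$ of $\Cl_K$, the map $g\mapsto R(g,K,\Lambda)$ is a \emph{linear} functional on $W$, while by Conjecture~\ref{weakgenboch} its vanishing on a given non-zero $g$ is dictated entirely by the central value $L(1/2,\pi_g\times\theta_\Lambda)=L(1/2,\pi\times\theta_\Lambda)$. Since this value is the \emph{same} for every non-zero $g\in W$, we conclude that for each pair $(K,\Lambda)$ either $R(g,K,\Lambda)\ne 0$ for all non-zero $g\in W$, or $R(g,K,\Lambda)=0$ identically on $W$. Consequently, as soon as I can exhibit a single pair $(K_0,\Lambda_0)$ of the first type, the functional $g\mapsto R(g,K_0,\Lambda_0)$ has trivial kernel on $W$; a space carrying such a functional has dimension at most $1$, which is exactly what we want.

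What remains --- and where the genuine difficulty lies --- is to produce such a pair. By Fourier inversion on the finite abelian group $\Cl_K$, the defining relation \eqref{rfdef} yields $a(g,c)=|\Cl_K|^{-1}\sum_\Lambda R(g,K,\Lambda)\,\Lambda(c)$, so that $R(g,K,\Lambda)$ vanishes for \emph{all} $\Lambda$ precisely when $a(g,S)=0$ for every $S$ of discriminant $-d$. Thus a pair $(K_0,\Lambda_0)$ of the desired type exists if and only if some $g\in W$ --- for instance $f_1$ --- has a non-zero Fourier coefficient at a \emph{fundamental} discriminant. The crux of the matter is therefore the non-vanishing input that a non-zero holomorphic Siegel cusp form of degree $2$ cannot have all of its fundamental Fourier coefficients equal to zero; this is a substantive fact about Fourier coefficients rather than a formal consequence of the representation theory, and I expect it to be the main obstacle. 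Granting it, $f_1$ supplies some $(K_0,\Lambda_0)$ with $R(f_1,K_0,\Lambda_0)\ne 0$, hence $L(1/2,\pi\times\theta_{\Lambda_0})\ne 0$ by Conjecture~\ref{weakgenboch}, and the linear-functional argument above then forces $\dim W\le 1$, i.e.\ $f_1=c\,f_2$.
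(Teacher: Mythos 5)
Your proposal is correct and takes essentially the same route as the paper: the linear functional $g\mapsto R(g,K_0,\Lambda_0)$ with trivial kernel is just a repackaging of the paper's explicit combination $g_1=f_1-\frac{R(f_1,K,\Lambda)}{R(f_2,K,\Lambda)}f_2$, and the dichotomy you derive from Conjecture~\ref{weakgenboch} is the paper's contradiction step. The one input you flag as the ``main obstacle'' --- that a non-zero $f\in S_k(\Gamma_2)$ has a non-vanishing Fourier coefficient at some fundamental discriminant --- is exactly the external fact the paper invokes, a theorem proved in~\cite{sahafund}, so your conditional step is in fact unconditional.
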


\begin{proof} The proof relies on the following fact which was proved in \cite{sahafund}:

 \medskip

\emph{  Let $0 \neq f \in S_k(\Gamma_2)$. Then there exists a matrix $S$ of the form given by equation~\eqref{e:matrixform} such that $\disc(S)$ is a fundamental discriminant and $a(f,S) \neq 0$.}

\medskip

Now, assume Conjecture~\ref{weakgenboch}.  Suppose $f_1$ and $f_2$ are two Hecke eigenforms in $S_k(\Gamma_2)$ such that for all primes $p$, we have an equality of Hecke eigenvalues $\lambda_p(f_1) = \lambda_p(f_2)$ and $\lambda_{p^2}(f_1) = \lambda_{p^2}(f_2)$. Let $-d <0$ be a fundamental discriminant such that there exists a matrix $S$ with $\disc(S) = -d$ and $a(f_2, S) \neq 0$. Such a $d$ exists by the fact quoted above. Put $K = \Q(\sqrt{-d})$ and pick a character $\Lambda$ of $\Cl_K$ such that $R(f_2, K, \Lambda) \neq 0$. Now, put $g_1 = f_1 - \frac{R(f_1, K, \Lambda)}{R(f_2, K, \Lambda)} f_2.$

We claim that $g_1 = 0$. Suppose not. Then $g_1$ and $f_2$ are two non-zero Hecke eigenforms in $S_k(\Gamma_2)$ such that for all primes $p$, we have an equality of Hecke eigenvalues $\lambda_p(g_1) = \lambda_p(f_2)$ and $\lambda_{p^2}(g_1) = \lambda_{p^2}(f_2)$. Since the Hecke operators at $p$ and $p^2$ generate the full Hecke algebra, it follows that $L(s, \pi_{g_1} \times  \theta_\Lambda) = L(s, \pi_{f_2} \times  \theta_\Lambda)  $. However, by construction, we have $R(f_2, K, \Lambda) \neq 0$ and $R(g_1, K, \Lambda) = 0$. Since we are assuming Conjecture~\ref{weakgenboch} holds true, this means that $L(1/2, \pi_{g_1} \times  \theta_\Lambda) = 0$ and $L(1/2, \pi_{g_1} \times  \theta_\Lambda) \neq 0$. This is a contradiction. Thus $g_1 = 0$. Hence $f_1$ and $f_2$ are multiples of each other.

\end{proof}

\begin{remark}  The proof shows that Bessel coefficients can serve as a substitute for the missing Whittaker coefficients. Although our theorem was stated only for full level eigenforms on $\Sp_4(\Z)$, it should be possible to use the same idea and get similar results for other groups under certain assumptions. The conjectures of Gan, Gross and Prasad~\cite{ggp} should be relevant here.
\end{remark}
\bibliography{lfunction}

\end{document}